\newcommand{\TT}{\mathbb T}
\newcommand{\pat}{\partial_t}
\newcommand{\pax}{\partial_x}
\newtheorem{theorem}{Theorem}
\title{A nonlocal model describing tumor angiogenesis}
\author[R. Granero-Belinch\'{o}n]{Rafael Granero-Belinch\'{o}n$^*$}
\email{rafael.granero@unican.es}
\address{Departamento  de  Matem\'aticas,  Estad\'istica  y  Computaci\'on,  Universidad  de Cantabria.  Avda.  Los  Castros  s/n,  Santander,  Spain.}
\begin{document}
\begin{abstract}
In this paper we study the onset of angiogenesis and derive a new model to describe it. This new model takes the form of a nonlocal Burgers equation with both diffusive and dispersive terms. For a particular value of the parameters, the equation reduces to
$$
\pat p-\frac{1}{2}(-\Delta)^{(\alpha-1)/2}H \pat p=-\frac{1}{2}(-\Delta)^{\alpha/2}  p+ p\pax p-\partial_x p,
$$
where $H$ denotes the Hilber transform. In addition to the derivation of the new model, we also prove a number of well-posedness results. Finally, some preliminary numerics are shown. These numerics suggest that the dynamics of the equation is rich enough to have solutions that blow up in finite time.
\end{abstract}

\subjclass[2010]{}
\keywords{Burgers equation, Dispersive equation, Angiogenesis}


\maketitle
{\small
\tableofcontents}

\allowdisplaybreaks
\section{Introduction}
The motion of cells in response to a different values of chemical concentrations is known as chemotaxis. When the chemical is diffusible, the resulting problem has been heavily studied by many different authors since the pioneer work of Patlak \cite{patlak1953random}. In the case of non-diffusible signals that are deposited by the cells the resulting system of partial differential equations is
\begin{equation}\label{eq:1}
\left\{\begin{aligned}
\pat u&=-(-\Delta)^{\alpha/2} u+\chi\pax \left(u\frac{\pax w}{w}\right),\\
\pat w&=uw,
\end{aligned}\right.\text{ for }x\in\TT,\,t\geq0,
\end{equation}
This system was proposed by Othmers \& Stevens \cite[Equation (78)]{stevens1997aggregation} to model cells moving randomly that deposit a non-diffusible signal that modifies the local environment for subsequent movement. For instance, one can consider the movement of myxobacteria or ants. Indeed, myxobacteria produce slime over which other myxobacteria can move easily and ants can follow
trails left by other ants. Such a chemotactic motion is a crucial step in many different biological phenomena ranging from slime mold aggregation \cite{keller1970initiation} to the formation of new blood vessels from pre-existing blood vessels in a process that is called angiogenesis \cite{levine2000mathematical}. 

Angiogenesis is a very complicated phenomenon that appears in many different biological situations. Due to its importance, it has been studied by many different authors in the mathematical community (see for instance \cite{corrias2003chemotaxis,friedman2002stability,granero2017global,granero2017fractional,levine2000mathematical,levine2001mathematical,
li2011hyperbolic,li2010nonlinear,li2011asymptotic,wang2008shock} and the references therein). Angiogenesis is also a key step during tumor growth. Roughly speaking (see \cite{levine2001mathematical} for a more detailed description), endothelial cells are located in the inner part of blood vessels, lying over a part of the extracellular matrix called the basal lamina. Then, during certain stage of tumor growth, the tumor induce angiogenesis by releasing angiogenic factors. Activated by these chemicals, endothelial cells in nearby capillaries thicken and accumulate in certain regions. Following activation, cell-released proteases degrade the basal lamina adjacent to the activated endothelial cells. The endothelial cells loosen their contact with their neighbor cells and begin to penetrate the basal lamina. Then the vessel wall dilates as the endothelial cells accumulate and a sprout is formed. This sprout is composed of endothelial cells where the angiogenic stimulus has reached a threshold. This new capillary network then supplies nutrients to the tumor colony and allow for further for tumor expansion. 

The purpose of this paper is to derive and study new mathematical models to describe angiogenesis. In that regards, system \eqref{eq:1} serves as a starting point. In particular, \eqref{eq:1} were also derived by Levine, Sleeman \& Nilsen-Hamilton \cite[Equation (7.2.1)]{levine2001mathematical} (to obtain \eqref{eq:1} from equation (7.2.1) take $\theta\equiv 0$ and rename the parameters and unknowns) to describe the initial step of capillary formation in tumor angiogenesis (see also Levine, Sleeman \cite{sleeman1997system}). Similar equations were also derived in \cite[Equations (4.1) and (4.2)]{levine2000mathematical} and \cite[Equation (2.2.8)]{levine2001mathematical}). In these works, the movement of endothelial cells is modeled using the idea of reinforced random walks and the extracellular matrix is modeled with only one of its components, fibronectin \cite{levine2001mathematical}. Fibronectin plays an important role in the attachment and migration of cells. In this framework, $u$ describes the concentration of endothelial cells and $w$ describes the density of capillary wall, represented by fibronectin \cite{levine2000mathematical}. The core of the idea is that the accumulation of endothelial cells in certain region along a capillary is stimulated by low levels of fibronectin \cite{levine2001mathematical}. 

In this paper we derive the following Burgers equation with a dispersive term
\begin{align}\label{eq:Burgers}
\pat p-\frac{1}{2}(-\Delta)^{(\alpha-1)/2}H \pat p=-\frac{1}{2\varepsilon}(-\Delta)^{\alpha/2}  p+\chi p\pax p-\frac{\beta}{\varepsilon}\partial_x p,
\end{align}
where $H$ is the Hilbert transform and $(-\Delta)^{s/2}$ is the fractional Laplacian. These two are singular integral operators that can also be defined using Fourier variables (see below for proper definitions). Setting
$$
\beta=\frac{\chi-1}{2},
$$
$\varepsilon$ a small parameter and 
$$
\partial_x \log(w)=\varepsilon p
$$
equation \eqref{eq:Burgers} appears as an asymptotic model of \eqref{eq:1} for near homogeneous values of endothelial cell density
$$
u(x,t)=1+\varepsilon h(x,t).
$$

Burgers equations with nonlocal terms of diffusive type such as
$$
\pat p=-(-\Delta)^{\alpha/2}  p+ p\pax p,
$$
have been the topic of study of different research groups in the last years. In terms of the dychotomomy global well-posedness vs finite time blow up phenomena, Kiselev, Nazarov \& Shterenberg  \cite{kiselev2008blow} and Dong, Du \& Li \cite{dong2009finite} established the global existence for large values of $\alpha$ together with a finite time singularity result for small values of $\alpha$ (see also \cite{burczak2016critical,chickering2021asymptotically}). Other properties of the solution have also been the goal of different research projects \cite{alibaud2010asymptotic,biler1998fractal,karch2008convergence}.

In the case of dispersive regularizations of Burgers equations, Linares, Pilod \& Saut \cite{linares2014dispersive} and Molinet, Pilod \& Vento \cite{molinet2018well} studied the global solvability of a Whitham type equations
$$
\pat p=(-\Delta)^{\alpha/2}\pax p+ p\pax p.
$$
Dispersive Burgers equations are known to have singularities in finite time \cite{castro2010singularity,hur2017wave,saut2020wave}. Particular mention must be done to the dispersionless Burgers-Hilbert equation
$$
\pat p=H p+ p\pax p.
$$
There, the singularities occur\cite{castro2010singularity,saut2020wave} but they do at later times than suggested by standard energy estimates \cite{hunter2012enhanced,hunter2015long}. Also, stability of travelling waves \cite{castro2021stability} and global existence of weak solutions are known \cite{bressan2014global}.

\subsection{Notation}
We introduce the Hilbert transform
$$
H f( \alpha) = \frac{1}{2\pi} P.V. \int_\mathbb{T} \frac{f (y)}{\tan((x - y)/2) } d y  \,.
$$
This singular integral operator is the following multiplier operator in the Fourier variables
$$
\hat f(k) = {\dfrac{1}{\sqrt{2\pi}}} \int_{\mathbb{T}} f(x) \ 
e^{-ikx}dx,
$$
namely
$$
\widehat{Hf}(k)=-i\text{sgn}(k) \hat{f}(k).
$$
Finally, we introduce the fractional Laplacian operator,  
$$
\widehat{(-\Delta)^{\alpha/2} f}(k)=|k|^\alpha\hat{f}(k).
$$
The functional spaces that we will use in this paper are the $L^2$-based homogeneous Sobolev spaces
\begin{equation*}\label{Sobhomo}
H^\alpha(\mathbb{T})=\left\{u\in L^2(\mathbb{T}),\quad \|u\|_{H^\alpha(\mathbb{T})}^2:=\sum_{k\in\mathbb{Z}}|k|^{2\alpha}|\widehat{u}(k)|^2<\infty
\right\}.
\end{equation*}
and the homogeneous Wiener spaces $A^\alpha(\mathbb{T})$ as
\begin{equation}\label{Wienerhomo}
A^\alpha(\mathbb{T})=\left\{u\in L^1(\mathbb{T}),\quad \|u\|_{A^\alpha(\mathbb{T})}:=\sum_{k\in\mathbb{Z}} |k|^\alpha|\widehat{u}(k)|<\infty\right\}.
\end{equation}

\section{Derivation}
Let us begin with the derivation of \eqref{eq:Burgers} from \eqref{eq:1}. We start with the system \eqref{eq:1} written for $$
q=\partial_x \log(w),
$$
\begin{equation}\label{eq:1v1}
\left\{\begin{aligned}
\pat u&=-(-\Delta)^{\alpha/2} u+\chi\pax (uq),\\
\pat q&=\pax u,
\end{aligned}\right.\text{ for }x\in\TT,\,t\geq0.
\end{equation}
We fix $\varepsilon$ a small parameter. After changing variables as follows
$$
u=1+\varepsilon h,\quad q=\varepsilon p
$$
we find that
\begin{align*}
\pat h&=-(-\Delta)^{\alpha/2} h+\varepsilon\chi\pax (hp)+\chi\pax p,\\
\pat p&=\pax h,
\end{align*}
where $0\leq \alpha\leq 2$. We use far field variables 
$$
\xi=x-t,\quad \tau=\varepsilon t,
$$
so
$$
\partial_t=\varepsilon \partial_\tau-\partial_\xi,\quad \partial_x=\partial_\xi.
$$
Then, the previous system reads
\begin{align*}
\varepsilon \partial_\tau h -\partial_\xi h&=-(-\Delta)^{\alpha/2} h+\varepsilon \chi \partial_\xi (hp)+\chi\partial_\xi p,\\
\varepsilon \partial_\tau p -\partial_\xi p&=\partial_\xi h,
\end{align*}
Differentiating the equation for $p$ in the $\tau$ variable, we find that
$$
\varepsilon^2 \partial_\tau^2 p -\varepsilon\partial_\xi \partial_\tau p=\varepsilon\partial_\xi \partial_\tau h.
$$
Due to the equation for $h$, we find that
$$
\varepsilon^2 \partial_\tau^2 p -\varepsilon\partial_\xi \partial_\tau p=\partial_\xi^2 h-(-\Delta)^{\alpha/2} \partial_\xi h+\varepsilon \chi \partial_\xi^2 (hp)+\chi\partial_\xi^2 p.
$$
Using that
$$
h=-p+\varepsilon\int\partial_\tau p d\xi,
$$
we find that
$$
\varepsilon^2 \partial_\tau^2 p -\varepsilon\partial_\xi \partial_\tau p=\varepsilon \partial_\xi\partial_\tau p -\partial_\xi^2 p-(-\Delta)^{\alpha/2} (\varepsilon \partial_\tau p -\partial_\xi p)+\varepsilon\chi\partial_\xi^2 \left(\left(-p+\varepsilon\int\partial_\tau p d\xi\right)p\right)+\chi\partial_\xi^2 p.
$$
Then, if we neglect terms of order $O(\varepsilon^2)$, we obtain the equation
\begin{align*}
-2\varepsilon\partial_\tau\partial_\xi p=(-\Delta)^{\alpha/2}\partial_\xi p-\varepsilon(-\Delta)^{\alpha/2} \partial_\tau p -\varepsilon \chi\partial_\xi^2 (p^2)+(\chi-1)\partial_\xi^2 p.
\end{align*}
Integrating in $\xi$ and changing back to our previous notation for the independent variables, we conclude 
\begin{align}\label{eq:Burgers2}
\pat p-\frac{1}{2}(-\Delta)^{(\alpha-1)/2}H \pat p=-\frac{1}{2\varepsilon}(-\Delta)^{\alpha/2}  p+\chi p\pax p-\frac{\chi-1}{2\varepsilon}\partial_x p,
\end{align}
which is \eqref{eq:Burgers} after renaming the parameters. Once we have derived this model, the rest of the paper is devoted to its mathematical study. Thus, from this point onwards, and for the sake of generality, we consider that the parameter $\varepsilon$ can take arbitrary values. To simplify the notation we consider the new variable
$$
p=\chi p
$$
and consider the equation
\begin{align}\label{eq:Burgers2v2}
\pat p-\frac{1}{2}(-\Delta)^{(\alpha-1)/2}H \pat p=-\frac{1}{2\varepsilon}(-\Delta)^{\alpha/2}  p+ p\pax p-\frac{\beta}{\varepsilon}\partial_x p.
\end{align}
From \eqref{eq:Burgers2v2}, we can further compute
\begin{multline*}
\left(1+\frac{1}{2}(-\Delta)^{(\alpha-1)/2}H\right)\left(1-\frac{1}{2}(-\Delta)^{(\alpha-1)/2}H\right) \pat p\\
=-\left(1+\frac{1}{2}(-\Delta)^{(\alpha-1)/2}H\right)\frac{1}{2\varepsilon}(-\Delta)^{\alpha/2}  p+\left(1+\frac{1}{2}(-\Delta)^{(\alpha-1)/2}H\right)\partial_x\left(\frac{p^2}{2}\right)\\
-\frac{\beta}{\varepsilon}\left(1+\frac{1}{2}(-\Delta)^{(\alpha-1)/2}H\right)\partial_x p.
\end{multline*}
Using
$$
\left(1+\frac{1}{2}(-\Delta)^{(\alpha-1)/2}H\right)\left(1-\frac{1}{2}(-\Delta)^{(\alpha-1)/2}H\right)=1+\frac{1}{4}(-\Delta)^{\alpha-1}
$$
so $p$ solves
\begin{multline}\label{eq:Burgers3}
\pat p+\frac{1}{4}(-\Delta)^{\alpha-1}\pat p\\
=-\frac{\beta+1}{2\varepsilon}(-\Delta)^{\alpha/2}  p-\frac{1}{4\varepsilon}(-\Delta)^{\alpha-1/2}H  p+\partial_x\left(\frac{p^2}{2}\right)+(-\Delta)^{\alpha/2}\left(\frac{p^2}{4}\right)-\frac{\beta}{\varepsilon}\partial_x p.
\end{multline}
We observe that this equation resembles the classical BBM equation \cite{benjamin1972model} or the Buckley-Leverett equation \cite{buckley1942mechanism,burczak2016generalized} (see also \cite{montgomery2001finite}).

\section{The case $\alpha=0$}
In the case $\alpha=0$, equation \eqref{eq:Burgers3} reads as follows
$$
\pat p+\frac{1}{4}(-\Delta)^{-1}\pat p
=-\frac{1}{2\varepsilon}p-\frac{1}{4\varepsilon}(-\Delta)^{-1/2}H  p+\partial_x\left(\frac{p^2}{2}\right)+\frac{p^2}{4}
-\frac{\beta}{\varepsilon}\left(\partial_x p+\frac{1}{2} p\right).
$$
Taking $-\Delta$ of the previous equation and using that
$$
(-\Delta)^{1/2}H=-\partial_x,
$$
we compute
\begin{equation}\label{eq:alpha0}
-\Delta\pat p+\frac{1}{4}\pat p
=\frac{1+\beta}{2\varepsilon}\Delta p+\frac{1}{4\varepsilon}\pax p-\partial^3_x\left(\frac{p^2}{2}\right)-\Delta\left(\frac{p^2}{4}\right)
+\frac{\beta}{\varepsilon}\partial_x^3 p.
\end{equation}
For this equation we have the following well-posedness theorem:
\begin{theorem}[Strong well-posedness for $\alpha=0$]
Let $p_0\in H^2$ be a zero-mean initial data, $\beta>-1$ and $\varepsilon>0$ be fixed constants. Then there exists a unique local solution to \eqref{eq:alpha0}
$$
p\in C([0,T_{max}),H^{2})\cap L^2(0,T_{max};H^{2})
$$
for a small enough $0<T_{max}\ll1$. Furthermore, there exists $0<c_0$ such that if
$$
\|\Delta p_0\|_{L^2}+\frac{1}{4}\|\pax p_0\|_{L^2}^2\leq c_0,
$$
then we have that there exists a unique global solution to \eqref{eq:alpha0}
$$
p\in C([0,T),H^2)\cap L^2(0,T;H^{2})\quad\forall\,T>0
$$
emanating from this initial data. Furthermore, the solution verifies
$$
\|\Delta p\|_{L^2}+\frac{1}{4}\|\pax p\|_{L^2}^2+\frac{\beta+1}{2\varepsilon}\int_0^t\|\Delta p(s)\|_{L^2}^2ds\leq C(p_0).
$$
\end{theorem}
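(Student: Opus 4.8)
The plan is to treat \eqref{eq:alpha0} as an evolution equation driven on the left by the positive, boundedly invertible operator $-\Delta+\tfrac14$, whose symbol $k^2+\tfrac14\geq\tfrac14$ never degenerates. Since every term on the right-hand side is a derivative or carries a factor $\Delta$, the $k=0$ mode satisfies $\tfrac14\pat\widehat p(0)=0$, so the zero mean of $p_0$ is propagated and I may work throughout in the homogeneous norms $\|\pax p\|_{L^2}$ and $\|\Delta p\|_{L^2}$, which are genuine norms there. The natural energy is uncovered by testing \eqref{eq:alpha0} against $-\Delta p$: the left-hand side yields exactly $\tfrac{d}{dt}\big(\tfrac12\|\Delta p\|_{L^2}^2+\tfrac18\|\pax p\|_{L^2}^2\big)$; the diffusion term $\tfrac{1+\beta}{2\varepsilon}\Delta p$ produces the favourable $-\tfrac{1+\beta}{2\varepsilon}\|\Delta p\|_{L^2}^2$ (this is where $\beta>-1$ enters); and the transport term $\tfrac{1}{4\varepsilon}\pax p$ together with the dispersive term $\tfrac{\beta}{\varepsilon}\partial_x^3 p$ both drop out, their Fourier symbols being odd and purely imaginary tested against the real even symbol $k^2$. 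Thus the entire linear part is benign and everything reduces to the two nonlinear terms.

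The crux is the nonlinear dispersive term $-\partial_x^3(p^2/2)$, which formally costs one derivative relative to $H^2$. Tested against $-\Delta p$ it gives $\tfrac12\langle\partial_x^3(p^2),\partial_x^2 p\rangle$; expanding $\partial_x^3(p^2)=2p\,\partial_x^3 p+6\,\pax p\,\partial_x^2 p$ and integrating by parts in the top-order piece, $\int p\,\partial_x^3 p\,\partial_x^2 p=-\tfrac12\int\pax p\,(\partial_x^2 p)^2$, the derivative-losing contributions cancel and what remains is the genuinely cubic $\tfrac52\int\pax p\,(\partial_x^2 p)^2$. The same mechanism applied to $-\Delta(p^2/4)$ leaves only terms of the types $\int(\pax p)^2\partial_x^2 p$ and $\int p\,(\partial_x^2 p)^2$. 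Using that in one dimension, for zero-mean $p$, one has $\|p\|_{L^\infty}+\|\pax p\|_{L^\infty}\lesssim\|\Delta p\|_{L^2}$ by Sobolev embedding together with Poincar\'e, all these contributions are bounded by $C\|\Delta p\|_{L^2}^3$, giving the closed differential inequality
$$
\tfrac{d}{dt}\Big(\tfrac12\|\Delta p\|_{L^2}^2+\tfrac18\|\pax p\|_{L^2}^2\Big)+\tfrac{1+\beta}{2\varepsilon}\|\Delta p\|_{L^2}^2\leq C\|\Delta p\|_{L^2}^3.
$$
This is the heart of the argument; the remaining steps are structural.

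For local existence I would make this estimate rigorous through a regularization that preserves both the skew-symmetry of the transport and dispersive terms and the self-adjointness of $-\Delta+\tfrac14$, for instance a Friedrichs mollification of the nonlinearity (or a Fourier--Galerkin truncation, which is natural in this periodic setting). Each approximate problem is a locally Lipschitz ODE in $H^2$ and so has a unique solution; the energy computation goes through verbatim at the approximate level, since the mollifier is symmetric and commutes with $\pa$, yielding a uniform bound of the form $E'\lesssim E^{3/2}$. This produces a common existence time $T_{max}\sim\|\Delta p_0\|_{L^2}^{-1}$, a uniform bound in $C([0,T_{max});H^2)$, and, upon integrating the dissipative term, a uniform bound in $L^2(0,T_{max};H^2)$. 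Reading $\pat p$ from the equation bounds it in a space of negative order, so Aubin--Lions--Simon yields a strongly convergent subsequence and the mollified nonlinear terms pass to the limit. Uniqueness I would instead prove one derivative lower, at the $H^1$ level: for the difference $w=p_1-p_2$ I test the difference equation against $w$, observe that the transport and dispersive terms again vanish and the diffusion is favourable, and estimate the nonlinear differences $\partial_x^3\big((p_1+p_2)w\big)$ and $\Delta\big((p_1+p_2)w\big)$ by integrating by parts so that at most one derivative lands on $p_1+p_2\in H^2$; with $\|w\|_{L^\infty}\lesssim\|w\|_{H^1}$ this closes $\tfrac{d}{dt}\|w\|_{H^1}^2\lesssim\big(\|p_1\|_{H^2}+\|p_2\|_{H^2}\big)\|w\|_{H^1}^2$, and Gr\"onwall finishes since the coefficient lies in $L^\infty(0,T)$.

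The global statement then follows from the differential inequality by continuity: choosing $c_0$ so small that $C\|\Delta p\|_{L^2}\leq\tfrac12\cdot\tfrac{1+\beta}{2\varepsilon}$ whenever $\|\Delta p\|_{L^2}^2+\tfrac14\|\pax p\|_{L^2}^2\leq 2c_0$, the cubic term is absorbed into half of the dissipation, $E$ is non-increasing, and the smallness is propagated for all time. Keeping the other half of the dissipation and integrating in time gives precisely
$$
\|\Delta p\|_{L^2}^2+\tfrac14\|\pax p\|_{L^2}^2+\tfrac{\beta+1}{2\varepsilon}\int_0^t\|\Delta p(s)\|_{L^2}^2\,ds\leq\|\Delta p_0\|_{L^2}^2+\tfrac14\|\pax p_0\|_{L^2}^2=:C(p_0),
$$
which is the asserted bound. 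The single genuinely delicate point is the cancellation of the second paragraph: without it, $\partial_x^3(p^2)$ would demand a derivative that the bounded, order-zero symbol $k^2/(k^2+\tfrac14)$ of the linear diffusion cannot supply, so the proof rests on the algebraic structure of the nonlinearity rather than on any smoothing from the linear part.
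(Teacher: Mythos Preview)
Your proposal is correct and follows essentially the same approach as the paper: both test \eqref{eq:alpha0} against $-\Delta p$, exploit the skew-symmetry of the odd-order linear terms, reduce the two nonlinear contributions by integration by parts to the cubic expressions $\tfrac52\int\pax p\,(\partial_x^2 p)^2\,dx$ and $\tfrac12\int p\,(\partial_x^2 p)^2\,dx$, and close the inequality $E'\lesssim E^{3/2}$ via Sobolev/Poincar\'e to obtain local existence and the small-data global bound. Your write-up is somewhat more explicit about the regularization scheme and carries out the uniqueness argument at the $H^1$ level rather than invoking it as standard, but the substance is the same.
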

\begin{proof}
The proof follows from appropriate energy estimates after a standard regularization using for instance a Galerkin approximation (see \cite{AGM,GO} for a similar approach using mollifiers). Thus, we focus on obtaining the \emph{bona fide} energy estimates. We start noticing that the zero-mean property is propagated in time. Testing \eqref{eq:alpha0} against $-\Delta p$, we find
$$
\frac{1}{2}\frac{d}{dt}\left(\|\Delta p\|_{L^2}^2+\frac{1}{4}\|\pax p\|_{L^2}^2\right)
=-\frac{1+\beta}{2\varepsilon}\|\Delta p\|_{L^2}^2+\int_\mathbb{T}\Delta p\partial^3_x\left(\frac{p^2}{2}\right)dx+\int_\mathbb{T}\Delta p\Delta\left(\frac{p^2}{4}\right)dx.
$$
We have that
\begin{align}
I_1&=\int_\mathbb{T}\Delta p\partial^3_x\left(\frac{p^2}{2}\right)dx\nonumber\\
&=-\frac{1}{2}\int_\mathbb{T}\pax^3 p\partial^2_x\left(p^2\right)dx\nonumber\\
&=-\frac{1}{2}\int_\mathbb{T}\pax^3 p\left(2p \pax^2 p+2(\pax p)^2\right)dx\nonumber\\
&=\frac{5}{2}\int_\mathbb{T}\pax p (\pax^2 p)^2dx\label{eq:I1}
\end{align}
Similarly,
\begin{align}
I_2&=\int_\mathbb{T}\Delta p\Delta\left(\frac{p^2}{4}\right)dx\nonumber\\
&=\frac{1}{4}\int_\mathbb{T} \pax^2 p\left(2p \pax^2 p+2(\pax p)^2\right)dx\nonumber\\
&=\frac{1}{2}\int_\mathbb{T} p (\pax^2p)^2 pdx\label{eq:I2}.
\end{align}
Then, we define
$$
E(t)=\|\Delta p(t)\|_{L^2}^2+\frac{1}{4}\|\pax p(t)\|_{L^2}^2.
$$
The zero-mean property leads us to
$$
\|p\|_{L^\infty}\leq 2\pi\|\partial_x p\|_{L^\infty}.
$$ 
The previous ineqaulity, H\"older's inequality and Sobolev embedding, allow us to conclude the inequality
$$
\frac{d}{dt}E(t)\leq -\frac{1+\beta}{\varepsilon}\|\Delta p\|_{L^2}^2+C\|\pax p\|_{L^\infty}\|\Delta p\|_{L^2}^2\leq CE(t)^{3/2},
$$
where we have used
$$
\|\pax p\|_{L^\infty}^2\leq C\|\pax p\|_{L^2}\|\Delta p\|_{L^2}\leq CE(t).
$$
The local existence follows from the previous inequality using a classical regularization procedure (see, for instance, \cite{AGM,burczak2016generalized,GO}). The uniqueness follows from a standard contradiction argument together with the regularity of the solution. Similarly, using the previous computations, we can find the inequality
$$
\frac{d}{dt}E(t)\leq \left(C\sqrt{E(t)}-\frac{1+\beta}{\varepsilon}\right)\|\Delta p\|_{L^2}^2.
$$
As a consequence, if 
$$
E(0)\ll1
$$
then 
$$
\frac{d}{dt}E(t)\leq0
$$
and the solution is global.
\end{proof}
We can simplify the previous equation \eqref{eq:alpha0} and find that
$$
\pat p
=\frac{\beta+1}{2\varepsilon}\mathcal{K}\pax^2 p+\frac{1}{4\varepsilon}\mathcal{K}\pax p-\mathcal{K}\partial^3_x\left(\frac{p^2}{2}\right)-\mathcal{K}\Delta\left(\frac{p^2}{4}\right)
+\frac{\beta}{\varepsilon}\mathcal{K}\partial_x^3 p.
$$
where
$$
\widehat{\mathcal{K}}(k)=\frac{1}{\frac{1}{4}+k^2}.
$$
Written in this form, the equation is ready to be implemented using a Fourier collocation method to discretize in space. Then, the integration in time can be carried out using a standard Runge-Kutta procedure. In particular, after simulating the case $\alpha=0$ using a variable step Runge-Kutta 4-5 with $N=2^{12}$ spatial nodes, $\varepsilon=1$, $\beta=2$ and initial data
$$
p(x,0)=-2\sin(4x),
$$
we obtain the solution plotted in figures \ref{figalpha0} and \ref{figalpha01}. There we can see numerical evidence of finite time singularity formation as the solution seems to steepen up and the derivative seems to blow up.
\begin{figure}[h]
\begin{center}
\includegraphics[scale=0.5]{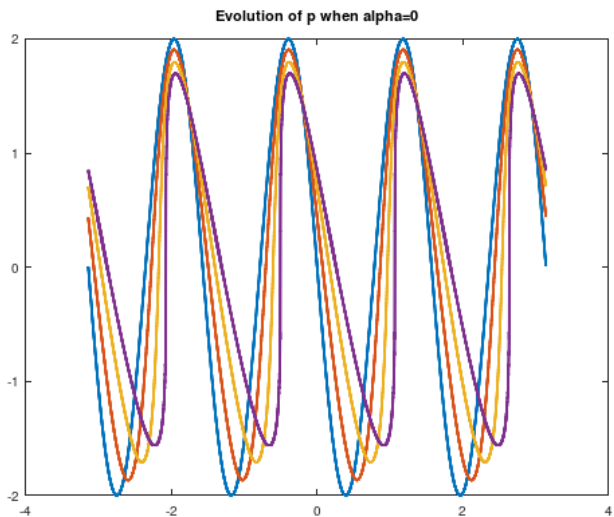}
\end{center}
\vspace{-.1 in}
\caption{The solution for different times.}
\label{figalpha0}
\end{figure}
\begin{figure}[h]
\begin{center}
\includegraphics[scale=0.4]{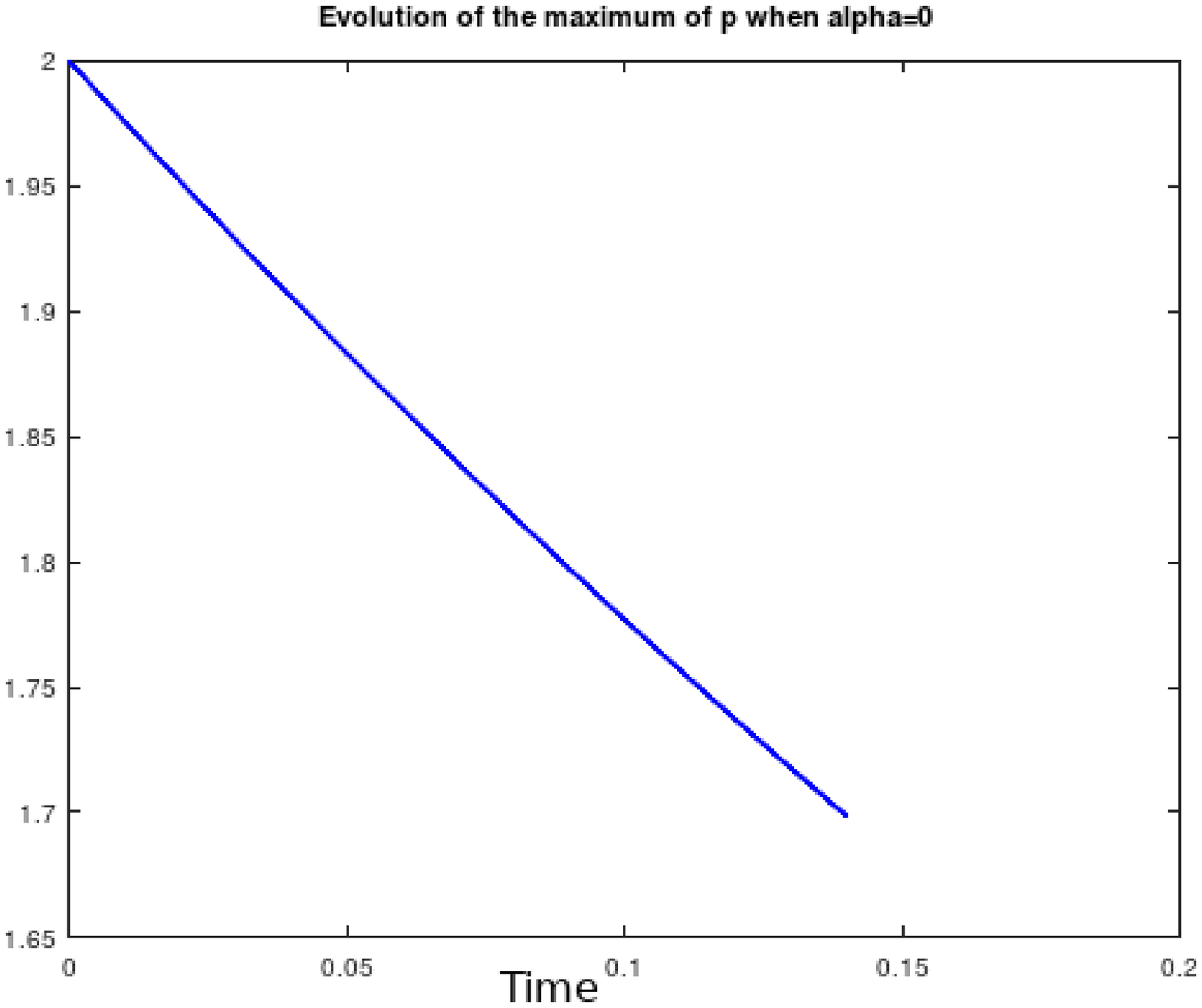} \includegraphics[scale=0.4]{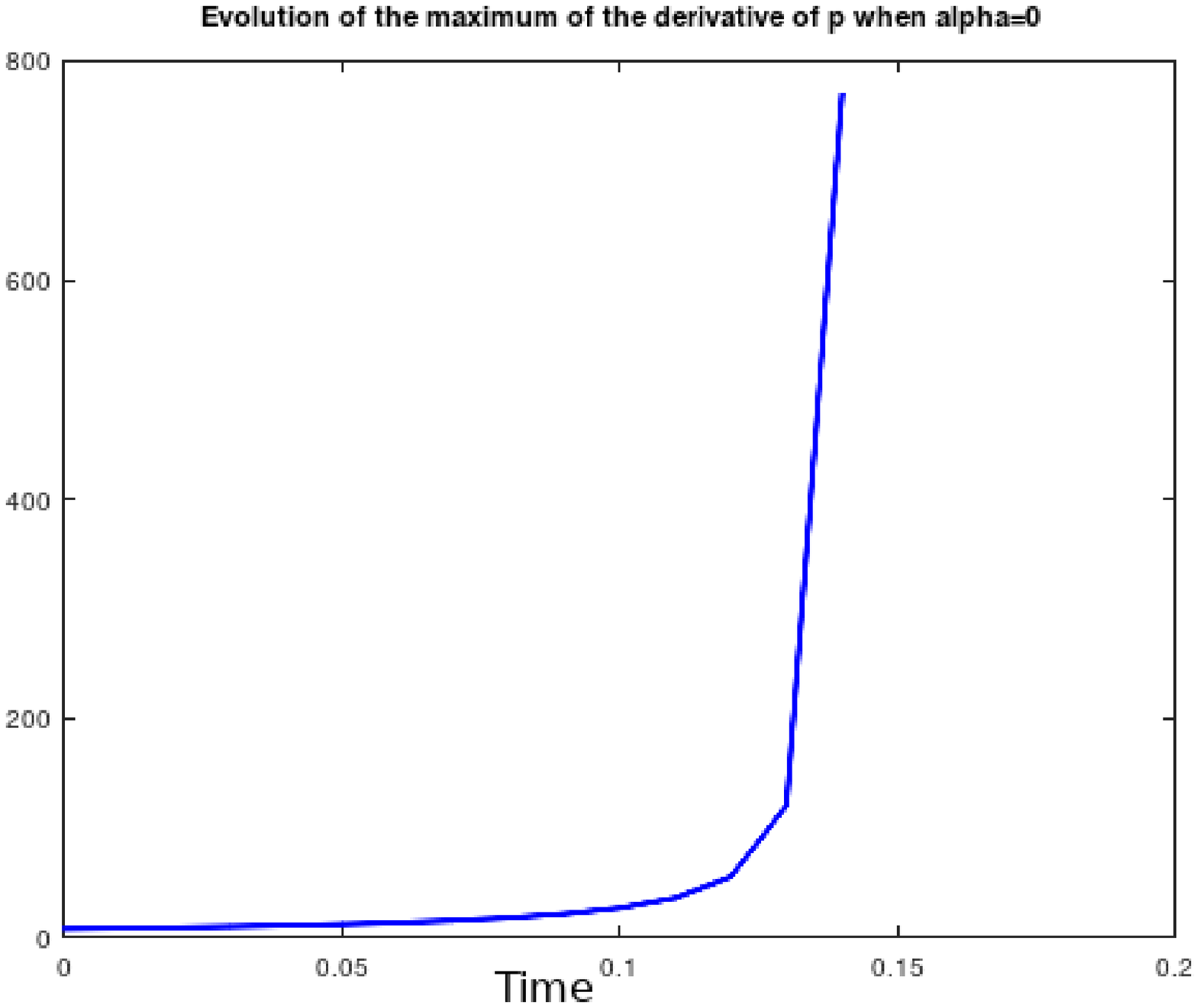}
\end{center}
\vspace{-.1 in}
\caption{a) $\|p(t)\|_{L^\infty}$ as a function of time. b) $\|\pax p(t)\|_{L^\infty}$ as a function of time.}
\label{figalpha01}
\end{figure}

\section{The case $\alpha=1$}
In this section we consider the case $\alpha=1$. This case is critical in the sense that every differential operator, regardless of its parabolic or hyperbolic character, is of order one. Then \eqref{eq:Burgers3} reduces to
$$
\frac{5}{4}\pat p=-\frac{1}{2\varepsilon}(-\Delta)^{1/2}  p-\frac{1}{4\varepsilon}(-\Delta)^{1/2}H  p+\partial_x\left(\frac{p^2}{2}\right)+(-\Delta)^{1/2}\left(\frac{p^2}{4}\right)
-\frac{\beta}{\varepsilon}\left(\partial_x p+\frac{1}{2}(-\Delta)^{1/2} p\right).
$$
Recalling
$$
(-\Delta)^{1/2}H=-\partial_x,
$$
we find the equation
\begin{equation}\label{eq:alpha1}
\frac{5}{4}\pat p=-\frac{\beta+1}{2\varepsilon}(-\Delta)^{1/2}  p+\frac{\left(\frac{1}{4}-\beta\right)}{\varepsilon}\pax p+p\pax p+(-\Delta)^{1/2}\left(\frac{p^2}{4}\right).
\end{equation}
\begin{theorem}[Strong well-posedness for $\alpha=1$]
Let $p_0\in H^2$ be a zero-mean initial data, $\beta>-1$ and $\varepsilon>0$ be fixed constants. Then there exists $0<c_0$ such that if
$$
\|p_0\|_{A^0}\leq c_0,
$$
then we have that there exists a unique global solution to \eqref{eq:alpha1}
$$
p\in C([0,T),H^2)\cap L^2(0,T;H^{2})\quad\forall\,T>0
$$
emanating from this initial data. Furthermore, the solution verifies
$$
\|p(t)\|_{A^1}+\frac{\beta+1}{5\varepsilon}\int_0^t\|p(s)\|_{A^2}ds\leq C(p_0).
$$
\end{theorem}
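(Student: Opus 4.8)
The plan is to run the entire argument on the Fourier side, where the Wiener norms $\|\cdot\|_{A^s}$ and the smallness condition $\|p_0\|_{A^0}\le c_0$ live most naturally. Writing $p\pax p=\pax(p^2/2)$ and passing to Fourier coefficients, equation \eqref{eq:alpha1} becomes
\begin{equation*}
\frac{5}{4}\pat\widehat{p}(k)=\left(-\frac{\beta+1}{2\varepsilon}|k|+\frac{\frac14-\beta}{\varepsilon}ik\right)\widehat{p}(k)+\left(\frac{ik}{2}+\frac{|k|}{4}\right)\widehat{p^2}(k).
\end{equation*}
The crucial structural observation is that the linear symbol splits into a dissipative real part $-\frac{\beta+1}{2\varepsilon}|k|$, which is strictly negative for $k\neq0$ precisely because $\beta>-1$, and a purely imaginary part $\frac{\frac14-\beta}{\varepsilon}ik$ coming from the transport/dispersive terms. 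Since multiplication by an imaginary number does not change a modulus, the latter drops out when we estimate $\frac{d}{dt}|\widehat p(k)|$, and only the dissipation survives.

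First I would derive the a priori estimates. Using $\frac{d}{dt}|\widehat p(k)|=\mathrm{Re}\big(\sgn(\overline{\widehat p(k)})\,\pat\widehat p(k)\big)$ (interpreting the sign at the zero set through the standard regularization $\sqrt{|\widehat p(k)|^2+\delta}$, $\delta\to0$), the dissipative term yields $-\frac{\beta+1}{2\varepsilon}|k|\,|\widehat p(k)|$, the imaginary linear term contributes nothing, and the nonlinear term is controlled by $\av{\frac{ik}{2}+\frac{|k|}{4}}\av{\widehat{p^2}(k)}\lesssim |k|\,\av{\widehat{p^2}(k)}$. Multiplying by $|k|^s$ and summing in $k$ gives, for $s=0$ and $s=1$,
\begin{equation*}
\frac54\frac{d}{dt}\|p\|_{A^s}\le -\frac{\beta+1}{2\varepsilon}\|p\|_{A^{s+1}}+C\|p^2\|_{A^{s+1}}.
\end{equation*}
The nonlinearity is then tamed by the Wiener-algebra inequality $\|fg\|_{A^{s+1}}\lesssim \|f\|_{A^{s+1}}\|g\|_{A^0}+\|f\|_{A^0}\|g\|_{A^{s+1}}$ (which follows from $|k|^{s+1}\le C(|j|^{s+1}+|k-j|^{s+1})$ and Young's inequality for convolutions), giving $\|p^2\|_{A^{s+1}}\lesssim \|p\|_{A^0}\|p\|_{A^{s+1}}$. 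Hence
\begin{equation*}
\frac54\frac{d}{dt}\|p\|_{A^s}\le\Big(-\frac{\beta+1}{2\varepsilon}+C\|p\|_{A^0}\Big)\|p\|_{A^{s+1}},\qquad s\in\{0,1\}.
\end{equation*}

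The smallness enters through the bootstrap closing these inequalities. Since $p$ is zero mean, $\widehat p(0)=0$, so $\|p\|_{A^0}\le\|p\|_{A^1}$, and the $s=0$ inequality shows that $\|p\|_{A^0}$ is nonincreasing as long as $C\|p\|_{A^0}\le\frac{\beta+1}{2\varepsilon}$. Choosing $c_0$ with $Cc_0\le\frac{\beta+1}{4\varepsilon}$ and assuming $\|p_0\|_{A^0}\le c_0$, a continuity argument shows $\|p(t)\|_{A^0}\le c_0$ for all times, whereupon the $s=1$ inequality becomes $\frac54\frac{d}{dt}\|p\|_{A^1}\le-\frac{\beta+1}{4\varepsilon}\|p\|_{A^2}\le0$. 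Integrating in time yields exactly
\begin{equation*}
\|p(t)\|_{A^1}+\frac{\beta+1}{5\varepsilon}\int_0^t\|p(s)\|_{A^2}\,ds\le\|p_0\|_{A^1}=:C(p_0),
\end{equation*}
which is the claimed bound; note that $\|p_0\|_{A^1}\le C\|p_0\|_{H^2}$ by Cauchy--Schwarz, so it is finite for $H^2$ data.

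Finally, the local existence, uniqueness and the placement of the solution in $C([0,T),H^2)\cap L^2(0,T;H^2)$ would be obtained from a standard regularization scheme (Galerkin or mollifiers) exactly as in the proof for $\alpha=0$ and in \cite{AGM,GO}, the a priori Wiener estimates above furnishing the global-in-time bound that rules out blow-up; the elementary embeddings $\|u\|_{H^s}\le\|u\|_{A^s}$ (from $\sum a_k^2\le(\sum a_k)^2$) translate the $A$-control into the stated Sobolev spaces. I expect the main obstacle to be the \emph{criticality} of the case $\alpha=1$: the nonlinear term $(-\Delta)^{1/2}(p^2/4)$ has the same order as the dissipation $(-\Delta)^{1/2}p$, so it cannot be absorbed by parabolic smoothing and there is no derivative to spare. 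The only way to close the estimate is to extract the small factor $\|p\|_{A^0}$ from the quadratic nonlinearity and compare it against the dissipation coefficient $\frac{\beta+1}{2\varepsilon}$. This is precisely what forces both the small-data hypothesis and the choice of Wiener spaces, in which the algebra estimate cleanly produces this factor at every order $s$ with a constant independent of $s$.
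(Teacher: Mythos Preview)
Your Wiener-space argument at levels $s=0$ and $s=1$ is essentially the same as the paper's proof: the paper also computes $\partial_t|\widehat p(k)|$, drops the imaginary transport symbol, and uses the convolution/algebra inequality $\|p^2\|_{A^{s+1}}\lesssim\|p\|_{A^0}\|p\|_{A^{s+1}}$ to close first $A^0$ and then $A^1$ under the smallness of $\|p_0\|_{A^0}$. Your unified treatment of the two levels is a mild streamlining of the paper, which handles $s=0$ and $s=1$ separately and at $s=1$ uses in addition the interpolation $\|p\|_{A^1}^2\le C\|p\|_{A^0}\|p\|_{A^2}$.

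The genuine gap is your last step. You assert that the embedding $\|u\|_{H^s}\le\|u\|_{A^s}$ ``translates the $A$-control into the stated Sobolev spaces'', but your a priori bounds only give $\sup_t\|p(t)\|_{A^1}<\infty$ and $\int_0^t\|p(s)\|_{A^2}\,ds<\infty$; the first controls $H^1$, not $H^2$, and the second gives $L^1_tH^2$, not $C_tH^2$ or $L^2_tH^2$. You cannot simply run your Wiener estimate at $s=2$, since $p_0\in H^2$ does not imply $p_0\in A^2$. The paper does \emph{not} rely on this embedding for the $H^2$ regularity: it performs a separate $H^2$ energy estimate, testing \eqref{eq:alpha1} against $\partial_x^4p$ and bounding the two nonlinear terms (the second one, $\int\partial_x^2(p^2/4)\,(-\Delta)^{1/2}\partial_x^2p\,dx$, via an $H^{1/2}$--$H^{-1/2}$ duality and fractional Leibniz argument) to reach
\[
\frac{5}{8}\frac{d}{dt}\|p\|_{H^2}^2+\frac{\beta+1}{2\varepsilon}\|p\|_{H^{2.5}}^2\le C\|p\|_{A^1}\|p\|_{H^{2.5}}^2,
\]
and then uses the previously obtained $A^1$ bound to close. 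Your intuition that the Wiener bound ``rules out blow-up'' is correct in spirit (it is precisely the control $\|\partial_xp\|_{L^\infty}\le\|p\|_{A^1}$ that the paper feeds into this $H^2$ estimate), but the $H^2$ energy inequality must actually be carried out, and the nonlocal term $(-\Delta)^{1/2}(p^2/4)$ requires a genuine commutator/duality argument rather than a simple integration by parts.
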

\begin{proof}
As before, the well-posedness will follow from appropriate energy estimates and a regularization approach. As before, we start noticing that the zero-mean property is propagated in time. In order to obtain the global existence of solution, we start estimating $\|p\|_{A^0}$. We have that
$$
\pat |\hat{p}(t,k)|=\frac{\Re(\overline{\hat{p}}(t,k)\pat \hat{p}(t,k))}{|\hat{p}(t,k)|},
$$
so, using the inequality
$$
\|FG\|_{A^0}\leq \|F\|_{A^0}\|G\|_{A^0},
$$
we have that
$$
\frac{5}{4}\frac{d}{dt}\|p\|_{A^0}+\frac{\beta+1}{2\varepsilon}\|p\|_{A^1}\leq \|p\|_{A^0}\|p\|_{A^1}+\frac{1}{4}\|p^2\|_{A^1}.
$$
Using the triangle inequality to find
$$
\|p^2\|_{A^1}\leq \sum_{k}|k|\sum_{n}|\hat{p}(k-n)||\hat{p}(n)|\leq \sum_{k}\sum_{n}(|k-n|+|n|)|\hat{p}(k-n)||\hat{p}(n)|\leq 2\|p\|_{A^0}\|p\|_{A^1},
$$
we conclude
$$
\frac{5}{4}\frac{d}{dt}\|p\|_{A^0}+\frac{\beta+1}{2\varepsilon}\|p\|_{A^1}\leq \frac{3}{2}\|p\|_{A^0}\|p\|_{A^1}.
$$
Then, if the initial data is small enough, we conclude the estimate
$$
\|p(t)\|_{A^0}+\frac{\beta+1}{5\varepsilon}\int_0^t\|p(s)\|_{A^1}ds\leq C(p_0).
$$
Repeating the computation for $\partial_x p$, we find that
$$
\frac{5}{4}\frac{d}{dt}\|p\|_{A^1}+\frac{\beta+1}{2\varepsilon}\|p\|_{A^2}\leq \|p\|_{A^0}\|p\|_{A^2}+\|p\|_{A^1}^2+\frac{1}{4}\|p^2\|_{A^2}.
$$
We compute
$$
\|p^2\|_{A^2}=\sum_{k}|k|^2\sum_{n}|\hat{p}(k-n)||\hat{p}(n)|\leq \sum_{k}\sum_{n}C(|k-n|^2+|n|^2)|\hat{p}(k-n)||\hat{p}(n)|\leq C\|p\|_{A^0}\|p\|_{A^2}.
$$
We now observe that (see \cite{gancedo2020surface})
$$
\|p\|_{A^1}^2\leq C\|p\|_{A^0}\|p\|_{A^2}.
$$
Then, 
$$
\frac{d}{dt}\|p\|_{A^1}+\frac{2\beta+2}{5\varepsilon}\|p\|_{A^2}\leq C\|p\|_{A^0}\|p\|_{A^2},
$$
and, if the initial data is small enough,
$$
\|p(t)\|_{A^1}+\frac{\beta+1}{5\varepsilon}\int_0^t\|p(s)\|_{A^2}ds\leq C(p_0).
$$
Now we multiply \eqref{eq:alpha1} by $\pax^4p$ and integrate by parts to find
\begin{align*}
\frac{5}{8}\frac{d}{dt}\|p\|_{H^2}^2+\frac{\beta+1}{2\varepsilon}\|p\|_{H^{2.5}}^2\leq -\int_{\mathbb{T}}\pax^2\left(\frac{p^2}{2}\right)\pax^3 pdx+\int_{\mathbb{T}}\pax^2\left(\frac{p^2}{4}\right)(-\Delta)^{1/2}\pax^2 pdx.
\end{align*}
Further integrations by parts together with H\"older and Sobolev inequalities show that
$$
-\int_{\mathbb{T}}\pax^2\left(\frac{p^2}{2}\right)\pax^3 pdx\leq C\|\pax p\|_{L^\infty}\|\pax^2 p\|_{L^2}^2.
$$
The remainder nonlinear term can be estimated using a duality $H^{1/2}-H^{-1/2}$ argument as follows
$$
\frac{1}{2}\int_{\mathbb{T}}(p\pax^2 p+(\pax p)^2)(-\Delta)^{1/2}\pax^2 pdx\leq C\|p\pax^2 p\|_{H^{1/2}}\|(-\Delta)^{1/2}\pax^2 p\|_{H^{-1/2}}+C\|\pax^2 p\|_{L^2}\|(\pax p)^2\|_{H^1}
$$
From the previous inequality, we obtain that
\begin{align*}
\frac{5}{8}\frac{d}{dt}\|p\|_{H^2}^2+\frac{\beta+1}{2\varepsilon}\|p\|_{H^{2.5}}^2\leq C\|\pax p\|_{L^\infty}\|p\|_{H^{2.5}}^2\leq C\|p\|_{A^1}\|p\|_{H^{2.5}}^2.
\end{align*}
If the initial data is small enough then we conclude 
$$
\|p(t)\|_{H^2}^2+\frac{\beta+1}{\varepsilon}\int_0^t\|p(s)\|_{H^{2.5}}^2ds\leq C(p_0).
$$
This concludes with the global existence part. The uniqueness follows using a standard contradiction argument using the regularity of the solutions.
\end{proof}
A numerical study of the equation with values $N=2^{10}$ spatial nodes, $\varepsilon=1$, $\beta=2$ and initial data
$$
p(x,0)=-4\sin(10x)
$$
can be seen in figure \ref{figalpha1}. There the solution appears to exists globally and decay towards the flat equilibrium state. We think that is the case for initial data for which the linear part is dominant, however, we think that an ill-posedness result for large data should also be true. This is left for a future work.
\begin{figure}[h]
\begin{center}
\includegraphics[scale=0.3]{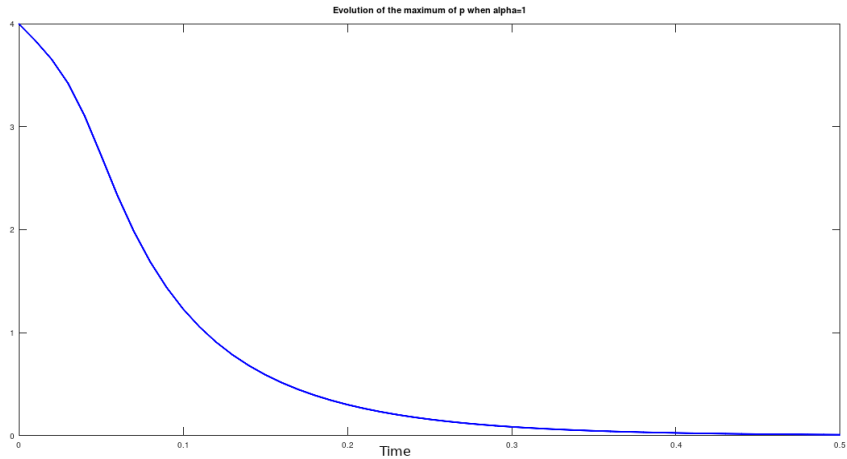}
\end{center}
\vspace{-.1 in}
\caption{$\|p(t)\|_{L^\infty}$ as a function of time.}
\label{figalpha1}
\end{figure}

\section{The case $\alpha=2$}
In this section we consider the case $\alpha=2$. Then equation \eqref{eq:Burgers3} reads
\begin{equation}\label{eq:alpha2}
\pat p-\frac{1}{4}\Delta\pat p=\frac{1+\beta}{2\varepsilon}\Delta  p-\frac{1}{4\varepsilon}\pax^3  p+p\pax p-\Delta\left(\frac{p^2}{4}\right)-\frac{\beta}{\varepsilon}\partial_x p.
\end{equation}

\begin{theorem}[Strong well-posedness for $\alpha=2$]
Let $p_0\in H^2$ be a zero-mean initial data, $\beta>-1$ and $\varepsilon>0$ be fixed constants. Then there exists a unique local solution to \eqref{eq:alpha2}
$$
p\in C([0,T_{max}),H^{2})\cap L^2(0,T_{max};H^{2})
$$
for a small enough $0<T_{max}\ll1$. Furthermore, there exists $0<c_0$ such that if
$$
\|p_0\|_{L^2}^2+\frac{1}{4}\|\pax p_0\|_{L^2}^2\leq c_0,
$$
then we have that there exists a unique global solution to \eqref{eq:alpha2}
$$
p\in C([0,T),H^2)\cap L^2(0,T;H^{2})\quad\forall\,T>0
$$
emanating from this initial data. Furthermore, the solution verifies
$$
\|\pax p(t)\|_{L^2}^2+\frac{1}{4}\|\Delta p(t)\|_{L^2}^2+\frac{\beta+1}{2\varepsilon}\int_0^t\|\Delta p(s)\|_{L^2}^2ds\leq C(p_0).
$$
\end{theorem}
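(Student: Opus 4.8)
The plan is to mirror the energy method used for the case $\alpha=0$, exploiting that the operator $1-\frac14\Delta$ acting on $\pat p$ corresponds to the positive, invertible BBM-type symbol $1+\frac14 k^2$ in Fourier variables. After a standard regularization (Galerkin approximation or mollifiers, as in \cite{AGM,GO}), everything reduces to \emph{bona fide} a priori estimates, and, as in the previous theorems, the zero-mean property is propagated in time.

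The central computation is to test \eqref{eq:alpha2} against $-\Delta p$. The left-hand side produces exactly the energy appearing in the statement: integrating by parts gives $\int_\mathbb{T}(\pat p-\frac14\Delta\pat p)(-\Delta p)\,dx=\tfrac12\frac{d}{dt}\big(\|\pax p\|_{L^2}^2+\tfrac14\|\Delta p\|_{L^2}^2\big)$, so I set $E(t)=\|\pax p\|_{L^2}^2+\tfrac14\|\Delta p\|_{L^2}^2$. On the right-hand side, the diffusion term contributes the good dissipation $-\frac{1+\beta}{2\varepsilon}\|\Delta p\|_{L^2}^2$ (here the hypothesis $\beta>-1$ is used to fix its sign), while the dispersive term $-\frac{1}{4\varepsilon}\pax^3 p$ and the transport term $-\frac\beta\varepsilon\pax p$ both vanish against $-\Delta p$, since each integrates to a perfect derivative on $\mathbb{T}$.

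It then remains to estimate the two nonlinear contributions $\int_\mathbb{T}p\pax p(-\Delta p)\,dx$ and $\int_\mathbb{T}\Delta(\tfrac{p^2}{4})\Delta p\,dx$. Integrating by parts and using the cubic structure, the first reduces to $\tfrac12\int_\mathbb{T}(\pax p)^3\,dx$, and in the second, after expanding $\Delta(p^2)=2(\pax p)^2+2p\pax^2 p$, the top-order piece $\int_\mathbb{T}(\pax p)^2\pax^2 p\,dx$ is a perfect derivative and vanishes, leaving $\tfrac12\int_\mathbb{T}p(\pax^2 p)^2\,dx$. Using the zero-mean Poincaré inequality $\|p\|_{L^\infty}\leq 2\pi\|\pax p\|_{L^\infty}$ together with the interpolation $\|\pax p\|_{L^\infty}^2\leq C\|\pax p\|_{L^2}\|\Delta p\|_{L^2}\leq CE(t)$ and $\|\pax p\|_{L^2}\leq C\|\Delta p\|_{L^2}$ (again by zero mean), all surviving nonlinear terms are bounded by $C\sqrt{E(t)}\,\|\Delta p\|_{L^2}^2$. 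This yields $\frac{d}{dt}E\leq CE^{3/2}$, which furnishes local existence on a short interval by comparison with the associated Riccati ODE, as well as the sharper inequality $\frac{d}{dt}E(t)\leq\big(C\sqrt{E(t)}-\frac{1+\beta}{\varepsilon}\big)\|\Delta p\|_{L^2}^2$.

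For the global statement, if $E(0)\leq c_0$ is small enough that $C\sqrt{E(0)}\leq\frac{1+\beta}{2\varepsilon}$, this smallness is propagated: the sharper inequality gives $\frac{d}{dt}E\leq0$, keeping $E$ small, and integrating $\frac{d}{dt}E+\frac{1+\beta}{2\varepsilon}\|\Delta p\|_{L^2}^2\leq0$ produces the claimed bound with $C(p_0)=E(0)$. Uniqueness follows from a standard energy/contradiction argument exploiting the regularity of the solutions, exactly as in the earlier theorems. I expect the only genuine obstacle to lie in the nonlinear estimates: one must arrange, through integration by parts and the cubic/antisymmetric structure, that every nonlinear term carries a factor $\|\Delta p\|_{L^2}^2$ so that it can be absorbed into the dissipation; the zero-mean hypothesis is essential both for the Poincaré-type interpolation bounds and for the vanishing of the perfect-derivative terms.
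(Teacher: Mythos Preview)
Your local-existence argument is essentially the same as the paper's: test against $-\Delta p$, obtain the energy $E(t)=\|\pax p\|_{L^2}^2+\tfrac14\|\Delta p\|_{L^2}^2$, and close a Riccati-type inequality. The nonlinear computations you give are correct.

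The global part, however, does not prove the theorem as stated. You close the argument under the assumption that $E(0)$ is small, but the hypothesis in the statement is that $F(0):=\|p_0\|_{L^2}^2+\tfrac14\|\pax p_0\|_{L^2}^2$ is small --- an $H^1$-level condition, strictly weaker than your $H^2$-level one. Controlling the nonlinear terms by $C\sqrt{E(t)}\|\Delta p\|_{L^2}^2$ forces the smallness to live at the $H^2$ level, and no amount of Poincar\'e or interpolation will convert smallness of $F(0)$ into smallness of $E(0)$.

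The missing idea is a two-tier energy hierarchy. First test \eqref{eq:alpha2} against $p$ itself to obtain
\[
\frac{d}{dt}F(t)\leq\Big(C\sqrt{F(t)}-\tfrac{1+\beta}{\varepsilon}\Big)\|\pax p\|_{L^2}^2,
\]
using $\|p\|_{L^\infty}\leq C\sqrt{F}$ by Sobolev embedding in one dimension; this yields $F(t)\leq F(0)$ globally when $F(0)$ is small. Second, revisit your $E$-estimate but now bound the nonlinear terms differently: write $\int(\pax p)^3=-2\int p\,\pax p\,\pax^2 p$ and estimate directly by $\|p\|_{L^\infty}\|\pax p\|_{L^2}\|\Delta p\|_{L^2}\leq C\|p\|_{L^\infty}\|\Delta p\|_{L^2}^2$ (Poincar\'e), and similarly for $\tfrac12\int p(\pax^2 p)^2$. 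This gives $\frac{d}{dt}E\leq\big(C\sqrt{F(0)}-\tfrac{1+\beta}{\varepsilon}\big)\|\Delta p\|_{L^2}^2$, which closes under the stated $H^1$-smallness. This is exactly the route the paper takes.
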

\begin{proof}
We observe that the zero-mean property is propagated in time. We focus on obtaining the appropriate energy estimates. Testing \eqref{eq:alpha2} against $-\Delta p$ and integrating by parts, we find that
$$
\frac{d}{dt}\left(\|\pax p\|_{L^2}^2+\frac{1}{4}\|\Delta p\|_{L^2}^2\right)= -\frac{\beta+1}{\varepsilon}\|\Delta p\|_{L^2}^2-\int_\mathbb{T}p\pax p \Delta pdx+\int_\mathbb{T}\Delta\left(\frac{p^2}{4}\right)\Delta pdx.
$$
If we define now
$$
E(t)=\|\pax p(t)\|_{L^2}^2+\frac{1}{4}\|\Delta p(t)\|_{L^2}^2,
$$
using \eqref{eq:I2} and another integration by parts, we conclude the inequality
$$
\frac{d}{dt}E(t)\leq -\frac{\beta+1}{\varepsilon}\|\Delta p\|_{L^2}^2 + C(\|p\|_{L^\infty}+\|\pax p\|_{L^\infty})E(t)\leq E(t)^{3/2},
$$
from where the local existence follows using a classical regularization procedure (see, for instance, \cite{AGM,burczak2016generalized,GO}). The uniqueness follows using a standard contradiction argument using the regularity of the solutions. To obtain the global existence now we test equation \eqref{eq:alpha2} with $p$ and integrate by parts. We find that
$$
\frac{d}{dt}\left(\|p\|_{L^2}^2+\frac{1}{4}\|\pax p\|_{L^2}^2\right)= -\frac{\beta+1}{\varepsilon}\|\pax p\|_{L^2}^2+\int_\mathbb{T}\Delta\left(\frac{p^2}{4}\right) pdx.
$$
We can also compute
$$
\int_\mathbb{T}\Delta\left(\frac{p^2}{4}\right) pdx=-\frac{1}{2}\int_\mathbb{T} p(\pax p)^2dx\leq C\|p\|_{L^\infty}\|\pax p\|_{L^2}^2
$$
Furthermore, if we define
$$
F(t)=\|p(t)\|_{L^2}^2+\frac{1}{4}\|\pax p(t)\|_{L^2}^2,
$$
Sobolev embedding and Young's inequality lead us to 
$$
\|p\|_{L^\infty}\leq C\sqrt{F(t)},
$$
so we also find that
$$
\frac{d}{dt}F(t)\leq \left(C\sqrt{F(t)}-\frac{1+\beta}{\varepsilon}\right)\|\pax p\|_{L^2}^2,
$$
and we conclude the global uniform bound in $H^1$
$$
F(t)\leq F(0),
$$
for small enough initial data in $H^1$. Once the global bound in $H^1$ is achieved, we turn our attention to the previous estimates in $H^2$. A finer study together with Poincar\'e inequality shows that
$$
\frac{d}{dt}E(t)\leq -\frac{\beta+1}{\varepsilon}\|\Delta p\|_{L^2}^2+C\|p\|_{L^\infty}\|\Delta p\|_{L^2}^2.
$$
As a consequence
$$
\frac{d}{dt}E(t)\leq \left(C\sqrt{F(0)}-\frac{\beta+1}{\varepsilon}\right)\|\Delta p\|_{L^2}^2.
$$
From where we can conclude the global existence for small data with a standard continuation argument.
\end{proof}
Equation \eqref{eq:alpha2} can be equivalently written as
$$
\pat p=\frac{1+\beta}{2\varepsilon}\mathcal{J}\Delta  p-\frac{1}{4\varepsilon}\mathcal{J}\pax^3  p+\mathcal{J}(p\pax p)-\mathcal{J}\Delta\left(\frac{p^2}{4}\right)-\frac{\beta}{\varepsilon}\mathcal{J}\partial_x p.
$$
with
$$
\widehat{\mathcal{J}}(k)=\frac{1}{1+\frac{k^2}{4}}.
$$
Using this formulation, we can run simulations using the previously mentioned Fourier collocation to discretize in time and Runge-Kutta 4-5 to advance in time. Then, if we fix $N=2^{12}$ spatial nodes, $\varepsilon=1$, $\beta=2$ and initial data
$$
p(x,0)=-6\sin(4x^2),
$$
we obtain the plots \ref{figalpha2}. We see that the solution seem to exists globally and to decay towards the flat equilibrium. This is also the case for a number of different initial data that we also considered. Based on this we are tempted to say that the solution is probably globally defined regardless of the size of the initial data, however, the proof of this claim is left for a future work.
\begin{figure}[h]
\begin{center}
\includegraphics[scale=0.4]{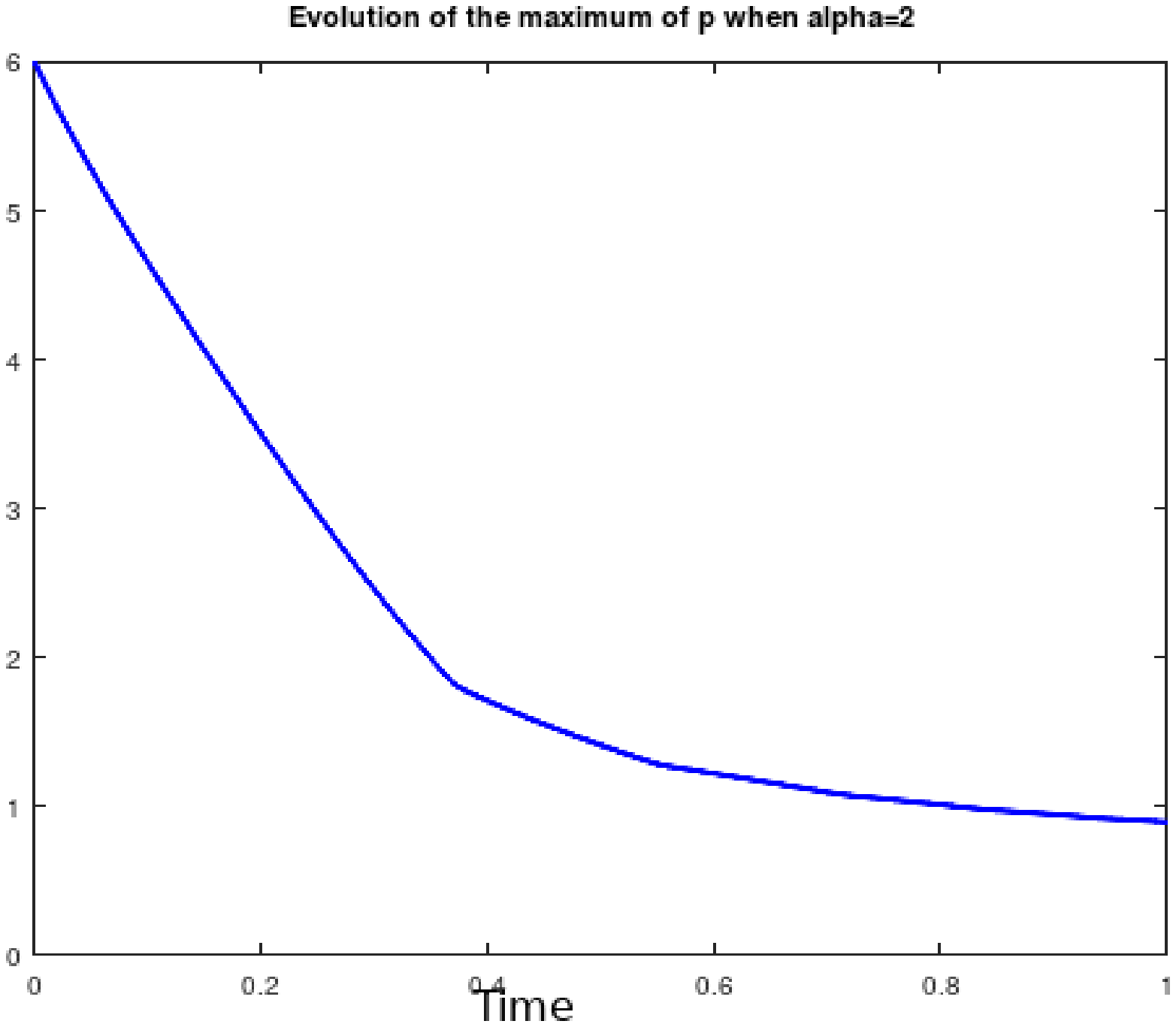} \includegraphics[scale=0.4]{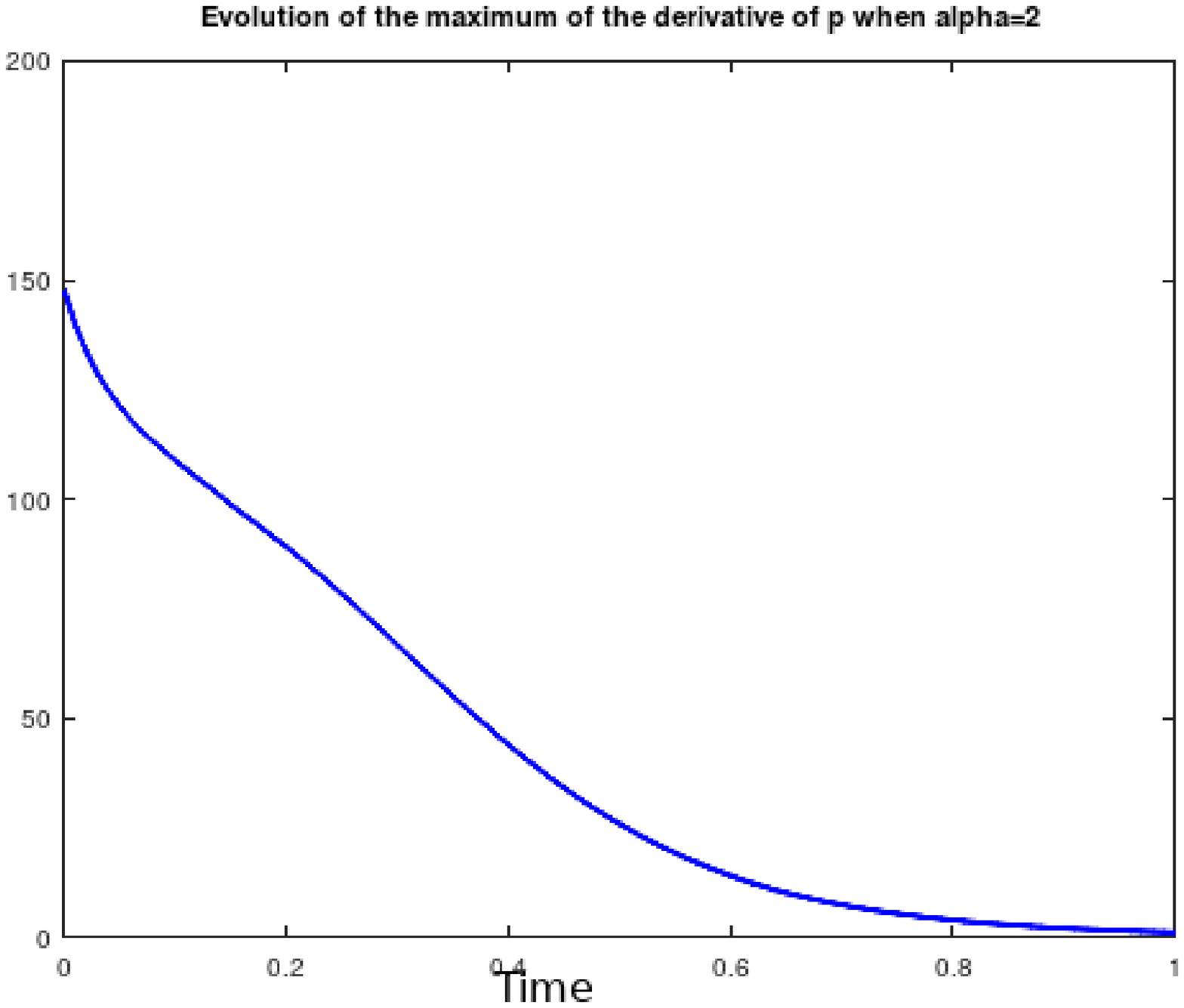}
\end{center}
\vspace{-.1 in}
\caption{a) $\|p(t)\|_{L^\infty}$ as a function of time. b) $\|\pax p(t)\|_{L^\infty}$ as a function of time.}
\label{figalpha2}
\end{figure}
\section{The case with general $\alpha$} 
In this section we prove the well-posedness of \eqref{eq:Burgers3} for general value of $\alpha$:
\begin{theorem}
Let $\alpha\geq0$, $\beta>-1$ and $\varepsilon>0$ be fixed constants. Define
$$
r=\max\{2,1+\alpha\}.
$$
Let $p_0\in H^r$ be a zero-mean initial data. There exists $0<c_0$ such that if
$$
\|p_0\|_{H^2}^2+\frac{1}{4}\|(-\Delta)^{(\alpha-1)/2}p_0\|_{H^2}^2\leq c_0,
$$
then we have that there exists a unique global solution to \eqref{eq:Burgers3}
$$
p\in C([0,T),H^{r})\cap L^2(0;T;H^{2+\frac{\alpha}{4}})\quad\forall\,T>0
$$
emanating from this initial data. Furthermore, the solution verifies
$$
\|p(t)\|_{H^2}^2+\frac{1}{4}\|(-\Delta)^{(\alpha-1)/2}p(t)\|_{H^2}^2+\frac{\beta+1}{2\varepsilon}\int_0^t\|(-\Delta)^{\alpha/4}\partial_x^2p(s)\|_{L^2}^2ds\leq C(p_0).
$$
\end{theorem}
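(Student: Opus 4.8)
The plan is to reproduce the energy method already used for $\alpha=0,1,2$, now keeping the fractional powers general, and to perform every computation on a Galerkin (Fourier-truncation) approximation of \eqref{eq:Burgers3}, for which all the manipulations below are rigorous; the resulting uniform bounds then pass to the limit by a standard compactness argument. Since every operator appearing in \eqref{eq:Burgers3} is a Fourier multiplier, the zero-mean condition is propagated in time, and the spectral projection onto $|k|\le N$ commutes with each of them, so no commutator error is created by the regularization.

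First I would test \eqref{eq:Burgers3} against $\partial_x^4 p=\Delta^2 p$. By Plancherel the left-hand side reproduces exactly $\tfrac12\frac{d}{dt}E(t)$ with
$$
E(t)=\|p(t)\|_{H^2}^2+\tfrac14\|(-\Delta)^{(\alpha-1)/2}p(t)\|_{H^2}^2,
$$
because $1+\tfrac14(-\Delta)^{\alpha-1}$ weights $|k|^4$ by $1+\tfrac14|k|^{2\alpha-2}$. The diffusive term $-\tfrac{\beta+1}{2\varepsilon}(-\Delta)^{\alpha/2}p$ contributes the coercive quantity $-\tfrac{\beta+1}{2\varepsilon}\|(-\Delta)^{\alpha/4}\partial_x^2 p\|_{L^2}^2$, which is the stated dissipation and has the right sign since $\beta>-1$. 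The dispersive term $-\tfrac1{4\varepsilon}(-\Delta)^{\alpha-1/2}Hp$ carries a purely imaginary, odd Fourier symbol, hence is skew-adjoint, and its pairing with the self-adjoint $\Delta^2$ vanishes; the transport term $-\tfrac{\beta}{\varepsilon}\partial_x p$ vanishes after integration by parts by periodicity. Only the two nonlinear terms survive.

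The heart of the matter is to absorb those two terms into the dissipation. For the Burgers term, integrating by parts and expanding $\partial_x^2(p^2)=2p\partial_x^2 p+2(\partial_x p)^2$ exactly as in \eqref{eq:I1} gives $\int_\mathbb{T}\partial_x(\tfrac{p^2}2)\partial_x^4 p\,dx\le C\|\partial_x p\|_{L^\infty}\|\partial_x^2 p\|_{L^2}^2$. For the fractional term I would use self-adjointness to write $\int_\mathbb{T}(-\Delta)^{\alpha/2}(\tfrac{p^2}4)\Delta^2 p\,dx=\tfrac14\int_\mathbb{T}(-\Delta)^{\alpha/4+1}(p^2)\,(-\Delta)^{\alpha/4+1}p\,dx$ and apply the fractional Leibniz (Kato–Ponce) inequality $\|(-\Delta)^{s}(p^2)\|_{L^2}\le C\|p\|_{L^\infty}\|(-\Delta)^{s}p\|_{L^2}$ with $s=\alpha/4+1$, which yields the bound $C\|p\|_{L^\infty}\|(-\Delta)^{\alpha/4}\partial_x^2 p\|_{L^2}^2$. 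In one dimension $H^2\hookrightarrow W^{1,\infty}$, so $\|p\|_{L^\infty}+\|\partial_x p\|_{L^\infty}\le C\|p\|_{H^2}\le C\sqrt{E}$, while the Poincar\'e inequality for zero-mean functions gives $\|\partial_x^2 p\|_{L^2}\le C\|(-\Delta)^{\alpha/4}\partial_x^2 p\|_{L^2}$ for $\alpha\ge0$. Both nonlinear terms are therefore controlled by $C\sqrt{E}\,\|(-\Delta)^{\alpha/4}\partial_x^2 p\|_{L^2}^2$, and I arrive at
$$
\frac{d}{dt}E(t)\le\Big(C\sqrt{E(t)}-\frac{\beta+1}{\varepsilon}\Big)\|(-\Delta)^{\alpha/4}\partial_x^2 p\|_{L^2}^2.
$$

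Finally, if $E(0)\le c_0$ is small enough that $C\sqrt{c_0}\le\tfrac{\beta+1}{2\varepsilon}$, a continuation argument gives $E(t)\le E(0)$ for all $t$, and integrating in time produces the claimed energy inequality. For zero-mean functions the weight $|k|^4+\tfrac14|k|^{2\alpha+2}$ is comparable to $|k|^{2r}$ with $r=\max\{2,1+\alpha\}$, so $E$ is equivalent to $\|p\|_{H^r}^2$; this, together with the equation, yields $p\in C([0,T),H^r)$, while the time integral of the dissipation delivers the $L^2(0,T;H^{2+\alpha/4})$ regularity. Uniqueness follows from an $L^2$ (or $H^1$) estimate on the difference of two solutions, whose variable coefficients are controlled by the $H^r$ bound just obtained. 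I expect the main obstacle to be precisely the fractional nonlinearity: one must check that the Kato–Ponce estimate at the exact level $s=\alpha/4+1$ matches the available dissipation uniformly over all $\alpha\ge0$, and that $E$ genuinely controls the full $H^r$ norm across both regimes $\alpha\le1$ and $\alpha\ge1$.
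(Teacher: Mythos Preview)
Your proposal is correct and follows essentially the same route as the paper: test \eqref{eq:Burgers3} against $\partial_x^4 p$, use the fractional Leibniz rule to bound $(-\Delta)^{\alpha/2}(p^2/4)$ by $C\|p\|_{L^\infty}\|(-\Delta)^{\alpha/4}\partial_x^2 p\|_{L^2}^2$, control $\|\partial_x p\|_{L^\infty}$ by $\|p\|_{H^2}$ via Sobolev embedding, and close by a smallness/continuation argument. The only minor difference is that the paper also records a separate $L^2$-level estimate (testing against $p$) before combining, whereas you go directly to the $\dot H^2$ level and invoke Poincar\'e to absorb $\|\partial_x^2 p\|_{L^2}^2$ into the dissipation; for zero-mean data these are equivalent.
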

\begin{proof}
As before, we focus on obtaining appropriate energy estimates. Similarly, the solution maintains the zero-mean property. Multiplying \eqref{eq:Burgers3} by $p$ and integrating by parts to find
\begin{align*}
\frac{1}{2}\frac{d}{dt}\left(\|p\|_{L^2}^2+\frac{1}{4}\|(-\Delta)^{(\alpha-1)/2}p\|_{L^2}^2\right)&=-\frac{\beta+1}{2\varepsilon}\|(-\Delta)^{\alpha/4}p\|_{L^2}^2+\int (-\Delta)^{\alpha/2}\left(\frac{p^2}{4}\right)pdx\\
&\leq -\frac{\beta+1}{2\varepsilon}\|(-\Delta)^{\alpha/4}p\|_{L^2}^2+C\|p\|_{L^\infty}\|(-\Delta)^{\alpha/4}p\|_{L^2}^2,
\end{align*}
where we have used the fractional Leibniz rule
$$
\|(-\Delta)^{s/2}(FG)\|_{L^q}\leq C\left(\|(-\Delta)^{s/2} F\|_{L^{q_1}}\|G\|_{L^{q_2}}\right.\\
\left.+\|(-\Delta)^{s/2} G\|_{L^{q_3}}\|F\|_{L^{q_4}}\right),
$$
with $s>\max\{0,1/q-1\}$
$$
\frac{1}{q}=\frac{1}{q_1}+\frac{1}{q_2}=\frac{1}{q_3}+\frac{1}{q_4}\qquad \mbox{where $1/2<q<\infty,1<p_i\leq\infty$}.
$$
Similarly, if we now multiply \eqref{eq:Burgers3} by $\partial_x^4 p$, we obtain that 
$$
\frac{1}{2}\frac{d}{dt}\left(\|\pax^2 p\|_{L^2}^2+\frac{1}{4}\|(-\Delta)^{(\alpha-1)/2}\pax^2 p\|_{L^2}^2\right)=-\frac{\beta+1}{2\varepsilon}\|(-\Delta)^{\alpha/4}\partial_x^2p\|_{L^2}^2+I_1+I_2
$$
with
$$
I_1=\int_\mathbb{T}p\partial_xp \partial_x^4pdx\leq \frac{5}{2}\|\partial_xp\|_{L^\infty}\|\partial_x^2 p\|_{L^2}^2
$$
$$
I_2=\int_\mathbb{T}(-\Delta)^{\alpha/2}\left(\frac{p^2}{4}\right)\partial_x^4pdx,
$$
where we have used \eqref{eq:I1}. Similarly, we compute that
\begin{align*}
I_2&=-\int_\mathbb{T}(-\Delta)^{\alpha/4+1}\left(\frac{p^2}{4}\right)(-\Delta)^{\alpha/4}\partial_x^2pdx\\
&\leq C\|p\|_{L^\infty}\|(-\Delta)^{\alpha/4}\partial_x^2p\|_{L^2}^2
\end{align*}
As a consequence, we conclude that
\begin{align*}
\frac{d}{dt}\left(\|p\|_{H^2}^2+\frac{1}{4}\|(-\Delta)^{(\alpha-1)/2}p\|_{H^2}^2\right)&\leq -\frac{\beta+1}{\varepsilon}\|(-\Delta)^{\alpha/4}p\|_{L^2}^2+C\|\pax p\|_{L^\infty}\|(-\Delta)^{\alpha/4}p\|_{L^2}^2\\
&\quad-\frac{\beta+1}{\varepsilon}\|(-\Delta)^{\alpha/4}\partial_x^2p\|_{L^2}^2+C\|\pax p\|_{L^\infty}\|(-\Delta)^{\alpha/4}\partial_x^2p\|_{L^2}^2\nonumber.
\end{align*}
Using the Sobolev embedding, we find that
$$
\|\pax p\|_{L^\infty}\leq C\|p\|_{H^{3/2+\delta}}\,\forall\,\delta>0.
$$
Taking $\delta=1/2$ we conclude that
\begin{align*}
\frac{d}{dt}\left(\|p\|_{H^2}^2+\frac{1}{4}\|(-\Delta)^{(\alpha-1)/2}p\|_{H^2}^2\right)&\leq -\frac{\beta+1}{\varepsilon}\|(-\Delta)^{\alpha/4}p\|_{L^2}^2+C\|p\|_{H^2}\|(-\Delta)^{\alpha/4}p\|_{L^2}^2\\
&\quad-\frac{\beta+1}{\varepsilon}\|(-\Delta)^{\alpha/4}\partial_x^2p\|_{L^2}^2+C\|p\|_{H^2}\|(-\Delta)^{\alpha/4}\partial_x^2p\|_{L^2}^2\nonumber\\
&\leq -\frac{\beta+1}{\varepsilon}\|(-\Delta)^{\alpha/4}p\|_{L^2}^2\nonumber\\
&\quad+C\left(\|p\|_{H^2}^2+\frac{1}{4}\|(-\Delta)^{(\alpha-1)/2}p\|_{H^2}^2\right)^{1/2}\|(-\Delta)^{\alpha/4}p\|_{L^2}^2\nonumber\\
&\quad-\frac{\beta+1}{\varepsilon}\|(-\Delta)^{\alpha/4}\partial_x^2p\|_{L^2}^2\nonumber\\
&\quad+C\left(\|p\|_{H^2}^2+\frac{1}{4}\|(-\Delta)^{(\alpha-1)/2}p\|_{H^2}^2\right)^{1/2}\|(-\Delta)^{\alpha/4}\partial_x^2p\|_{L^2}^2\nonumber.
\end{align*}
And, if the initial data is small enough, we find
$$
\frac{d}{dt}\left(\|p\|_{H^2}^2+\frac{1}{4}\|(-\Delta)^{(\alpha-1)/2}p\|_{H^2}^2\right)+\frac{\beta+1}{2\varepsilon}\|(-\Delta)^{\alpha/4}\partial_x^2p\|_{L^2}^2\leq0,
$$
from where we conclude the global existence. 

The uniqueness follows using a standard contradiction argument using the regularity of the solutions.
\end{proof}

\section*{Acknowledgments}
R.G-B was supported by the project "Mathematical Analysis of Fluids and Applications" Grant PID2019-109348GA-I00 funded by MCIN/AEI/ 10.13039/501100011033 and acronym "MAFyA". This publication is part of the project PID2019-109348GA-I00 funded by MCIN/ AEI /10.13039/501100011033. R.G-B is also supported by a 2021 Leonardo Grant for Researchers and Cultural Creators, BBVA Foundation. The BBVA Foundation accepts no responsibility for the opinions, statements, and contents included in the project and/or the results thereof, which are entirely the responsibility of the authors. The author thanks Martina Magliocca for her helpful comments that greatly improve the final version of the manuscript.

\bibliographystyle{plain}

\end{document}